\title{
Holonomic rank of ${\mathcal A}$-hypergeometric
differential-difference equations}
\author{
Katsuyoshi Ohara\footnote{Department of Computational Science,
Kanazawa University} and
Nobuki Takayama\footnote{Department of Mathematics, Kobe University}
}
\date{June 18, 2007}
\newtheorem{theorem}{Theorem}
\newtheorem{definition}{Definition}
\newtheorem{example}{Example}
\newtheorem{lemma}{Lemma}
\newcommand{\Ker}{\operatorname{Ker}}
\newcommand{\rank}{\operatorname{rank}}
\newcommand{\init}{\operatorname{in}}
\newcommand{\vol}{\operatorname{vol}}
\def\HH{{\boldsymbol{H}}}
\def\pd#1{\partial_{#1}}
\def\A{{\mathcal A}}
\def\C{{\mathbf C}}
\def\D{{\boldsymbol{D}}}
\def\R{{\mathbf R}}
\def\U{{\mathbf U}}
\def\Q{{\mathbf Q}}
\def\Z{{\mathbf Z}}
\numberwithin{equation}{section}
\begin{document}

\maketitle

\begin{abstract}
We introduce $\A$-hypergeometric differential-difference
equation $\HH_A$ and prove that its holonomic rank is equal 
to the normalized volume of $\A$
with giving a set of convergent series solutions.
\end{abstract}

\section{Introduction}

In this paper, we introduce $\A$-hypergeometric differential-difference
equation $\HH_A$ and study its series solutions and holonomic rank.

Let $A=(a_{ij})_{i=1,\ldots,d,j=1,\ldots,n}$ be a $d\times n$-matrix whose elements are integers.
We suppose that the set of the column vectors of $A$ spans ${\bf Z}^d$
and there is no zero column vector.
Let $a_i$ be the $i$-th column vector of the matrix $A$ and $F(\beta,x)$
the integral
\[
F(\beta,x) = \int_C \exp\left(\sum_{i=1}^n x_i t^{a_i} \right)
t^{-\beta-1} dt, \qquad t=(t_1, \ldots, t_d), \ 
\beta = (\beta_1, \ldots, \beta_d).
\]
The integral $F(\beta,x)$ satisfies the $\A$-hypergeometric
differential system associated to $A$ and $\beta$ ``formally''.
We use the word ``formally''
because, there is no general and rigorous description about the cycle $C$
(\cite[p.222]{SST}).

We will regard the parameters $\beta$ as variables.
Then, the function  $F(s,x)$ on the $(s,x)$ space satisfies
differential-difference equations ``formally'',
which will be our ${\cal A}$-hypergeometric differential-difference system.

Rank theories of $\A$-hypergeometric differential system have been
developed since Gel'fand, Zelevinsky and Kapranov \cite{GZK}.
In the end of 1980's, under the condition that the points lie on a same
hyperplane,
they proved that the rank of
$\A$-hypergeometric differential system $H_A(\beta)$ agrees with
the normalized volume of $A$ for any parameter $\beta \in {\bf C}^d$ 
if the toric ideal $I_A$ has the
Cohen-Macaulay property.  After their result had been gotten, 
many people have studied on conditions such that the rank equals the normalized volume.
In particular, Matusevich, Miller and Walther proved that $I_A$ has the
Cohen-Macaulay property if the rank of $H_A(\beta)$ agrees with the
normalized volume of $A$ for any $\beta \in {\bf C}^d$ (\cite{MMW}).


In this paper, we will introduce ${\cal A}$-hypergeometric differential-difference
system, which can be regarded as a generalization of difference equation
for the $\Gamma$-function, the Beta function, and the Gauss hypergeometric difference
equations.
As the first step on this differential-difference system, 
we will prove our main Theorem~\ref{th:main} utilizing theorems on ${\cal A}$-hypergeometric
differential equations,
construction of convergent series solutions with a homogenization
technique, 
uniform convergence of series solutions, and Mutsumi Saito's results for contiguity
relations \cite{Saito1}, \cite{SST-compositio}, \cite[Chapter 4]{SST}.
The existence theorem \ref{th:converge} on convergent series fundamental set of solutions 
for ${\cal A}$-hypergeometric differential equation for generic $\beta$
is the second main theorem of our paper.
Finally, we note that,
for studying our $\A$-hypergeometric
differential-difference system, we wrote a program
``yang''~(\cite{Oh}, \cite{OpenXM})
on a computer algebra system Risa/Asir and did several experiments
on computers to conjecture and prove our theorems.

\section{Holonomic rank}
Let $\D$ be the ring of differential-difference operators
\[
\C
\langle
x_1, \ldots, x_n, s_1, \ldots, s_d,
\pd{1}, \ldots, \pd{n}, S_1, \ldots, S_d, S_1^{-1}, \ldots,
S_d^{-1} \rangle
\]
where the following (non-commutative) product rules are assumed
\[
S_i s_i = (s_i+1) S_i,
\quad
S_i^{-1} s_i = (s_i-1) S_i^{-1},
\quad
\pd{i} x_i = x_i \pd{i} + 1
\]
and the other types of the product of two generators commute.

Holonomic rank of a system of differential-difference equations
will be defined by using the following ring of differential-difference
operators with rational function coefficients
\[
\U = \C(s_1, \ldots, s_d, x_1, \ldots, x_n)
\langle S_1, \ldots, S_d, S_1^{-1}, \ldots, S_d^{-1},
\pd{1}, \ldots, \pd{n} \rangle
\]
It is a $\C$-algebra generated by rational functions in
$s_1, \ldots, s_d, x_1, \ldots, x_n$
and differential operators
$ \pd{1}, \ldots, \pd{n} $
and difference operators
$S_1, \ldots, S_d, S_1^{-1}, \ldots, S_d^{-1}$.
The commutation relations are defined by
$  \pd{i} c(s,x) = c(s,x) \pd{i} + \frac{\partial c}{\partial x_i} $,
$  S_{i} c(s,x) = c(s_1,\ldots, s_i+1, \ldots, s_d,x) S_{i}$,
$  S_{i}^{-1} c(s,x) = c(s_1,\ldots, s_i-1, \ldots, s_d,x) S_{i}^{-1}$.

Let $I$ be a left ideal in $\D$.
The holonomic rank of $I$ is the number
\[
\rank(I) = {\rm dim}_{\C(s,x)} \U/(\U I).
\]
In case of the ring of differential operators ($d=0$),
the definition of the holonomic rank agrees with the standard definition
of holonomic rank in the ring of differential operators.

For a given left ideal $I$, the holonomic rank can be evaluated
by a Gr\"obner basis computation in $\U$.


\section{$\A$-hypergeometric differential-difference equations}

Let $A=(a_{ij})_{i=1,\ldots,d,j=1,\ldots,n}$ be an integer $d \times n$ matrix of rank $d$.
We assume that the column vectors $\{ a_i \}$ 
of $A$ generates $\Z^d$ and there is no zero vector.
The {\it $\A$-hypergeometric differential-difference system}
$\HH_A$ is the following system of differential-difference equations
\begin{eqnarray*}
  \left( \sum_{j=1}^n a_{ij} x_j \pd{j} - s_i \right) \bullet f &=& 0
   \qquad\mbox{ for } i = 1, \ldots, d  \quad\mbox{ and } \\
  \left( \pd{j} - \prod_{i=1}^n S_{i}^{-a_{ij}} \right) \bullet f &= & 0
   \qquad\mbox{ for } j = 1, \ldots, n.
\end{eqnarray*}
Note that $\HH_A$ contains the toric ideal $I_A$.
(use~\cite[Algorithm 4.5]{S} to prove it.) 

\begin{definition} \rm
Define the unit volume in $\R^d$ as the volume of
the unit simplex $\{ 0, e_1, \ldots, e_d \}$.
For a given set of points $\A=\{ a_1 , \ldots, a_n \}$ in 
$\R^d$,
the normalized volume $\vol(\A)$ is 
the volume of the convex hull of the origin and $\A$.
\end{definition}

\begin{theorem}\label{thm:solutions}
$\A$-hypergeometric differential-difference system $\HH_A$ has
linearly independent
$\vol(A)$ series solutions.
\end{theorem}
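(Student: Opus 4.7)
The strategy is to promote the parameter $\beta$ in the classical $\Gamma$-series solutions of the GKZ system $H_A(\beta)$ to the symbolic variable $s$, normalize the coefficients by Gamma functions so that the parameter shift $s\mapsto s-Ae_j$ realizes the action of $\prod_i S_i^{-a_{ij}}$ on the coefficients, and then read off the remaining equations of $\HH_A$ from Saito's contiguity relations.

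Step 1 (construction). I apply Theorem~\ref{th:converge} to produce, for a fixed generic weight on the $x$-variables, one $\Gamma$-series for each of the $\vol(A)$ standard pairs, of the form
\[
\varphi_v(s,x) \;=\; \sum_{u\in\Ker_{\Z} A}\frac{x^{v(s)+u}}{\prod_{k=1}^n\Gamma(v_k(s)+u_k+1)},
\]
where $v(s)$ is an affine-linear section satisfying $A\,v(s)=s$ adapted to the chosen standard monomial. The homogenization technique together with the uniform convergence part of Theorem~\ref{th:converge} gives a common polydisc in $x$ and a nonempty open $U\subset\C^d$ in $s$ on which $\varphi_v(s,x)$ is jointly holomorphic.

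Step 2 (verification of $\HH_A$ and independence). The Euler-type equations $\sum_j a_{ij}x_j\pd{j}\varphi_v = s_i\,\varphi_v$ hold by the choice of $v(s)$. For the shift equations I differentiate termwise and apply the identity
\[
\frac{v_j(s)+u_j}{\Gamma(v_j(s)+u_j+1)} \;=\; \frac{1}{\Gamma(v_j(s)+u_j)},
\]
which rewrites $\pd{j}\varphi_v(s,x)$ as the series obtained from $\varphi_v$ by the substitution $v(s)\mapsto v(s)-e_j$, i.e.\ $s\mapsto s-Ae_j$. Since $\prod_{i}S_i^{-a_{ij}}$ acts on the $s$-dependent coefficients of $\varphi_v$ by exactly this shift, one obtains $\pd{j}\varphi_v = \prod_i S_i^{-a_{ij}}\,\varphi_v$, which is the analytic content of Saito's contiguity relations in our setting. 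Linear independence of the $\vol(A)$ series follows by specialising at a generic $s_0\in U$: the specialised family is the classical GKZ/SST basis of the $H_A(s_0)$-solution space, and a nontrivial $\C$-linear relation among the $\varphi_v(s,x)$ would specialise to one at $s_0$.

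\textbf{Main obstacle.} The delicate point is combining Step 1 with the joint $(s,x)$-analyticity used in Step 2: one must control convergence of the Gamma-normalized series uniformly on compacta in $U\times\{|x|<r\}$, so that termwise $x$-differentiation and the Gamma shift identity above are legitimate analytic operations and not merely formal manipulations. This is precisely what Theorem~\ref{th:converge} and the homogenization argument behind it are tailored to supply; once that uniform convergence is in hand, verification of the difference equations of $\HH_A$ reduces to the direct Gamma-function computation sketched above.
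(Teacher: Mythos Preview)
Your proposal is correct and follows the paper's approach: the paper constructs the same $\Gamma$-series via the GKZ/standard-pair machinery, verifies the Euler and shift equations of $\HH_A$ by exactly the Gamma-function identity you write down, and deduces linear independence from that of the leading monomials $x^{\lambda}$ at generic parameter. The only cosmetic difference is that the paper splits explicitly into the homogeneous case (direct convergence via Lemma~\ref{prop:2}) and the inhomogeneous case (homogenize, then dehomogenize), with Theorem~\ref{th:converge} carrying the latter---so your appeal to Theorem~\ref{th:converge} is not citing an auxiliary lemma but rather invoking what is, in the paper, the substantive part of the proof of Theorem~\ref{thm:solutions} itself.
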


The proof of this theorem is divided into two parts.
The matrix $A$ is called homogeneous when it contains a row
of the form $(1,\ldots, 1)$.
If $A$ is homogeneous, then the associated toric ideal $I_A$ 
is homogeneous ideal \cite{S}.
The first part is the case that $A$ is homogeneous.
The second part is the case that $A$ is not homogeneous.

\begin{proof} ($A$ is homogeneous.)
We will prove the theorem with the homogeneity assumption of $A$.
In other words, we suppose that $A$ is written as follows:
\[
A =
\left(
\begin{matrix}
1 & \cdots & 1 \\
  &  *     &
\end{matrix}
\right).
\]
Gel'fand, Kapranov, Zelevinski gave a method to construct
$m=\vol(A)$ linearly independent solutions of 
$H_A(\beta)$ with the homogeneity condition of $A$
(\cite{GZK}).
They suppose that $\beta$ is fixed as a generic ${\bf C}$-vector.
Let us denote their series solutions by
$f_1(\beta;x), \ldots, f_m(\beta;x)$.
It is easy to see that the functions $f_i(s;x)$ are solutions of 
the differential-difference equations $\HH_A$.
We can show, by carefully checking the estimates of
their convergence proof,
that there exists an open set
in the $(s, x)$ space such that
$f_i(s; x)$ is locally uniformly convergent with respect to $s$ and $x$.
Let us sketch their proof to see that their series
converge as solutions of $\HH_A$.
The discussion is given in~\cite{GZK}, but we need to rediscuss it
in a suitable form to apply it to the case of inhomogeneous $A$. 

Let $B$ be a matrix of which the set of column vectors is a basis of
$\Ker (A: \Q^n \to \Q^d)$ and is normalized as follows:
\[
B = \left(
\begin{array}{ccc}
1 & & \\
  & \ddots & \\
  & & 1 \\
  & * &
\end{array}
\right) \in M(n,n-d,\Q).
\]
We denote by $b^{(i)}$ the $i$-th column vector of $B$ and
by $b_{ij}$ the $j$-th element of $b^{(i)}$.
Then the homogeneity of $A$ implies
\[
\sum_{j=1}^n b_{ij} = 0.
\]

Let us fix a regular triangulation $\Delta$ of 
$\A = \{ a_1, \ldots, a_n\}$ 
following the construction by Gel'fand, Kapranov, Zelevinsky.   
Take a $d$-simplex $\tau$ in the triangulation $\Delta$.
If $\lambda \in \C^n$ is admissible for a $d$-simplex $\tau$ of
$\{1,2,\ldots, n\}$ 
({\it admissible} $\Leftrightarrow$ for all $j \not\in \tau$, $\lambda_j \in \Z$),
and
$A\lambda = s$ holds, then $\HH_A$ has a formal series solution
\[
\phi_\tau(\lambda;x) = \sum_{l\in L} \frac{x^{\lambda+l}}{\Gamma(\lambda+l+1)},
\]
where $L = \Ker(A: \Z^n \to \Z^d)$ and
$\Gamma(\lambda+l+1) = \prod_{i=1}^n \Gamma(\lambda_i+l_i+1)$
and when a factor of the denominator of a term in the sum,
we regard the term is zero.
Put $\# \tau = n'$.
Note that there exists an open set $U$ in the $s$ space
such that $\lambda_i,\  i \in \tau$ lie in a compact set in
${\bf C}^{n'} \setminus {\bf Z}^{n'}$. 
Moreover, this open set $U$ can be taken as a common open set for all 
$d$-simplices in the triangulation $\Delta$ and the associated 
admissible $\lambda$'s 
when the integral values $\lambda_j \ (j \not\in \tau)$ are
fixed for all $\tau \in \Delta$.

Put
$L' = \{(k_1, \ldots, k_{n-d})\in \Z^{n-d} \mid \sum_{i=1}^{n-d} k_i b^{(i)} \in \Z^n \}$.
Then, $L'$ is  $\Z$-submodule of $\Z^{n-d}$ and
$L=\{\sum_{i=1}^{n-d}k_i b^{(i)} \mid k \in L'\}$.
In other words, $L$ can be parametrized with $L'$.
Without loss of the generality, we may suppose that 
$\tau = \{n-d+1,\ldots,n\}$.
Then, we have
\[
\phi_\tau(\lambda;x) =
\sum_{l\in L} \frac{x^{\lambda + l}}{\Gamma(\lambda+l+1)}
=
\sum_{k\in L'} \frac{x^{\lambda + \sum_{i=1}^{n-d}k_i b^{(i)}}}{\Gamma(\lambda+\sum_{i=1}^{n-d}k_i b^{(i)}+1)}
\]
Note that the first $n-d$ rows of $B$ are normalized. 
Then, we have
\[
\lambda_j + \sum_{i=1}^{n-d} k_i b_{ij} + 1 = \lambda_j + k_j + 1 \in \Z
\qquad (j=1, \ldots, n-d)
\]
Since $1/\Gamma(0) = 1/\Gamma(-1) = 1/\Gamma(-2)= \cdots = 0$, 
the sum can be written as
\[
\phi_\tau(\lambda;x)
=
\sum_{\substack{
k\in L'\\
\lambda_j+k_j+1\in \Z_{> 0}\\
(j=1,\ldots,n-d)
}}
\frac{x^{\lambda + \sum_{i=1}^{n-d}k_i b^{(i)}}}{\Gamma(\lambda+\sum_{i=1}^{n-d}k_i b^{(i)}+1)}
\]
Moreover, when we put
\begin{eqnarray*}
k_j'          &=& \lambda_j + k_j, \qquad (j=1, \ldots, n-d) \\
\lambda'      &=& \lambda - \sum_{i=1}^{n-d} \lambda_i b^{(i)} \\
\hat{\lambda} &=& (\lambda_1, \ldots, \lambda_{n-d})
\end{eqnarray*}
we have
\[
\sum_{i=1}^{n-d} k_i b^{(i)} =
- \sum_{i=1}^{n-d} \lambda_i b^{(i)} + \sum_{i=1}^{n-d} k_i'
 b^{(i)}
\]
Hence, the sum $\phi_\tau(\lambda; x)$ can be written as
\begin{eqnarray*}
\phi_\tau(\lambda;x)
&=&
\sum_{\substack{
k'\in L' + \hat{\lambda} \\
k'\in \Z_{\ge 0}^{n-d}
}}
\frac{x^{\lambda - \sum_{i=1}^{n-d}\lambda_i b^{(i)}}\cdot x^{\sum_{i=1}^{n-d}k_i' b^{(i)}}}
{\Gamma(\lambda-\sum_{i=1}^{n-d}\lambda_i b^{(i)}+\sum_{i=1}^{n-d}k_i'b^{(i)}+1)}
\\
&=&
x^{\lambda'}
\sum_{\substack{
k'\in L' + \hat{\lambda} \\
k'\in \Z_{\ge 0}^{n-d}
}}
\frac{(x^{b^{(1)}})^{k_1'}\cdots(x^{b^{(n-d)}})^{k_{n-d}'}}
{\Gamma(\lambda'+\sum_{i=1}^{n-d}k_i' b^{(i)}+1)}
\end{eqnarray*}
Note that our series with the coefficients in terms of Gamma functions
agree with those in \cite[\S 3.4]{SST}, which do not contain Gamma functions,
by multiplying suitable constants.
Hence we will apply some results on series solutions in \cite{SST} 
to our discussions in the sequel.

\begin{lemma}\label{prop:2}
Let $(k_i)\in (\Z_{\ge 0})^m$ and $(b_{ij}) \in M(m,n,\Q)$.
Suppose that 
\[
\sum_{i=1}^m k_i b_{ij} \in \Z,
\qquad
\sum_{j=1}^n b_{ij} = 0
\]
and
parameters $\lambda = (\lambda_1, \ldots, \lambda_n)$ belongs to 
a compact set $K$. 
Then 
there exists a positive number $r$, which is independent of $\lambda$, 
such that the power series 
\[
\sum_{\substack{
k'\in L' + \hat{\lambda} \\
k'\in \Z_{\ge 0}^{n-d}
}}
\frac{(x^{b^{(1)}})^{k_1'}\cdots(x^{b^{(n-d)}})^{k_{n-d}'}}
{\Gamma(\lambda'+\sum_{i=1}^{n-d}k_i' b^{(i)}+1)}
\]
is convergent in $|x^{b^{(1)}}|, \cdots, |x^{b^{(n-d)}}| < r$.
\end{lemma}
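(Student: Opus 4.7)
The plan is to dominate the series term by term by a convergent multiple geometric series, with all constants chosen uniformly in $\lambda \in K$. Set $c_j(k') := \sum_{i=1}^{n-d} k'_i b_{ij}$. First I would exploit the identity top-block of $B$ and the fact that $\lambda'_j = 0$ for $j \le n-d$ to factor the Gamma denominator as
\[
\Gamma\!\left(\lambda' + \sum_{i=1}^{n-d} k'_i b^{(i)} + 1\right) = \prod_{j=1}^{n-d} (k'_j)! \cdot \prod_{j=n-d+1}^{n} \Gamma(\lambda'_j + c_j(k') + 1),
\]
separating a ``factorial'' block from a ``generic'' block of size $d$.

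Next I would estimate each generic factor. When $c_j(k') \ge 0$, Stirling gives factorial decay of $1/\Gamma(\lambda'_j + c_j(k') + 1)$. When $c_j(k') < 0$, I would apply the reflection formula $1/\Gamma(z) = \pi^{-1}\sin(\pi z)\Gamma(1-z)$. Since $k' - \hat{\lambda} \in L'$, the number $c_j(k')$ is an integer, so $\lambda'_j + c_j(k')$ has the same fractional part as $\lambda_j$; consequently $|\sin(\pi(\lambda'_j + c_j(k') + 1))|$ is bounded above by $1$ and below by $|\sin(\pi \lambda_j)| > 0$, with the lower bound uniform in $\lambda$ by genericity and compactness of $K$. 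Stirling then bounds $|\Gamma(1-\lambda'_j - c_j(k'))|$ from above by a factorial of order $|c_j(k')|$.

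Then I would invoke the homogeneity relation $\sum_{j=1}^n b_{ij} = 0$, which yields $\sum_{j > n-d} c_j(k') = -\sum_i k'_i = -|k'|$, linking the total factorial growth coming from the negative-argument generic factors to the total factorial decay of $\prod_{j \le n-d} (k'_j)!$. A direct Stirling computation balances these contributions and produces a termwise bound of the shape
\[
|T_{k'}(\lambda)| \le P(|k'|) \prod_{i=1}^{n-d} (C_0 |x^{b^{(i)}}|)^{k'_i},
\]
with $P$ polynomial and $C_0$ independent of $\lambda \in K$. The lemma then follows by comparison with the multiple geometric series, which converges in any polydisc $|x^{b^{(i)}}| < r$ with $r < 1/C_0$.

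The hard part will be the factorial cancellation in the last step, which is the analytic heart of the Gel'fand-Kapranov-Zelevinsky convergence proof as reworked in \cite[\S 3.4]{SST}. Our actual novelty is not the cancellation itself but the verification that every Stirling constant depends continuously (indeed affinely) on $\lambda$ through $\lambda'$, so that compactness of $K$ immediately upgrades the pointwise estimates to uniform ones and delivers a single radius $r$ that works for all $\lambda \in K$.
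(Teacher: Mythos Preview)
Your approach is essentially the one the paper intends: the paper does not give a proof in the body but only states that ``the proof of this lemma can be done by elementary estimates of $\Gamma$ functions'' and defers to an external report, and your Stirling-plus-reflection argument with the homogeneity-driven factorial balancing is precisely such an elementary estimate, carried out with the uniformity in $\lambda\in K$ that the paper needs. One small slip: the claim that $c_j(k')$ itself is an integer is not quite what follows from $k'-\hat\lambda\in L'$; what you actually get (and what you then correctly use) is that $\lambda'_j+c_j(k')$ differs from $\lambda_j$ by an integer, which is exactly enough to control $|\sin\pi(\lambda'_j+c_j(k')+1)|$ uniformly via compactness and genericity.
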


The proof of this lemma can be done by elementary estimates of $\Gamma$ functions.
See \cite[pp.18--21]{kaken} if readers are interested in the details.
Since
\[
k' \in L' + \hat{\lambda}
\Longleftrightarrow
\sum_{i=1}^{n-d} k_i' b^{(i)} \in \Z^n
\]
it follows from Lemma~\ref{prop:2} that there exists a positive constant 
$r$ such that the series
converge in 
\begin{equation}\label{eq:radius2}
|x^{b^{(1)}}|, \cdots, |x^{b^{(n-d)}}| < r
\end{equation}
for any $s$ in the open set $U$.
We may suppose $r<1$.
Take the $\log$ of $(\ref{eq:radius2})$.
Then we have
\begin{equation}\label{eq:radius3}
b^{(k)}\cdot (\log |x_1|, \ldots, \log |x_n|) < \log |r| < 0
\quad \forall k \in \{ 1, \ldots, n-d \}
\end{equation}

Following \cite{GZK}, for the simplex $\tau$ and $r$,
we define the set $C(A,\tau,r)$ as follows.
\[
C(A,\tau,r) =
\left\{
\psi \in \R^n  \ \left| \ \exists \varphi \in \R^d,  \ \
\psi_i - (\varphi,a_i)
\begin{cases}
> -\log |r|, & i\not\in \tau, \\
= 0,         & i\in \tau,
\end{cases}
\right.
\right\}
\]
The condition ~$(\ref{eq:radius3})$ and
$(-\log |x_1|, \ldots, -\log |x_n|) \in C(A,\tau,r)$
is equivalent
(see \cite[section 4]{BFS} as to the proof).

Since $\Delta$ is a regular triangulation of $A$,
$
\bigcap_{\tau\in \Delta} C(A,\tau,r)
$
is an open set.
Therefore, when $s$ lies in the open set $U$ and $- \log |x|$ lies in the above
open set, the $\vol(A)$ linearly independent solutions converge. 
\end{proof}

Let us proceed on the proof for the inhomogeneous case.
We suppose that $A$ is not homogeneous and has only non-zero
column vectors.
We define the homogenized matrix as
\[
\tilde{A} =
\left(
\begin{matrix}
1      & \cdots & 1 & 1 \\
a_{11} & \cdots & a_{1n} & 0 \\
\vdots &        & \vdots & \vdots \\
a_{d1} & \cdots & a_{dn} & 0
\end{matrix}
\right)
\in M(d+1,n+1,\Z).
\]
For $s=(s_1, \ldots, s_n) \in \C^d$ and a generic complex number $s_0$,
we put $\tilde s = (s_0, s_1, \ldots, s_d)$.
We suppose that $\tau = \{ n-d+1, \ldots, d, d+1 \}$
is a $(d+1)$-simplex. 
Let us take an admissible $\lambda$ for $\tau$ such that
$\tilde{A} \tilde{\lambda} = \tilde{s}$
and
$\tilde{\lambda} = (\lambda_1, \ldots, \lambda_{n+1})\in \R^{n+1}$
as in the proof of the homogeneous case.
Put $\lambda=(\lambda_1, \ldots, \lambda_n)$.
Consider the solution of the hypergeometric system for $\tilde A$
\[
\tilde{\phi}_\tau(\tilde{\lambda}; \tilde{x}) =
\sum_{k'\in L'\cap S} \frac{
{\tilde {x}}^{\lambda+\sum_{i=1}^{n-d} k_i'b^{(i)}}}
{\Gamma(\lambda+\sum_{i=1}^{n-d} k_i'b^{(i)}+1)}
\]
and the series
\[
\phi_\tau(\lambda; x) =
\sum_{k'\in L'\cap S} \frac{
\prod_{j=1}^n x_j^{\lambda+\sum_{i=1}^{n-d} k_i'b_{ij}}}
{\prod_{j=1}^n \Gamma(\lambda_j+\sum_{i=1}^{n-d} k_i'b_{ij}+1)}
\]
($\tilde{x} = (x_1, \ldots, x_{n+1})$, $x=(x_1, \ldots, x_n)$).
Here, the set $S$ is a subset of $L'$ such that
an integer in $\Z_{\leq 0}$ does  not appear in the arguments
of the Gamma functions in the denominator.
We note that $L'$ for $\tilde{A}$ and $L'$ for $A$ agree,
which can be proved as follows.
Let $(k_1, \ldots, k_{n+1})$ be in the kernel of $\tilde{A}$ in $\Q^{n+1}$. 
Since $\tilde{A}$ contains the row of the form $(1, \ldots, 1)$,
then $(k_1, \ldots, k_n) \in \Z^n$ implies that $k_{n+1}$ is an integer.
The conclusion follows from the definition of $L'$.

\begin{definition} \rm
We call 
$\phi_\tau(\lambda; x)$ the {\it dehomogenization}
of  $\tilde{\phi}_\tau(\tilde{\lambda}; \tilde{x})$.
\end{definition}

Intuitively speaking, the dehomogenization is defined
by ``forgetting'' the last variable $x_{n+1}$ associated $\Gamma$ factors. 
See Example \ref{ex:series}.

Formal series solutions for the hypergeometric system
for inhomogeneous $A$ do not converge in general.
However, we can construct 
$\vol(A)$ convergent series solutions
as the dehomogenization of a set of series solutions
for $\tilde A$ hypergeometric system associated to
a regular triangulation on $\tilde \A $ induced by 
a ``nice'' weight vector $\tilde w (\varepsilon)$,
which we will define.
Put $\tilde{w} = (1,\ldots, 1, 0) \in \R^{n+1}$.
Since the Gr\"obner fan for the toric variety
$I_{\tilde{A}}$ is a polyhedral fan,
the following fact holds. 
\begin{lemma} \label{lemma:interior}
For any  $\varepsilon > 0$,
there exists $\tilde{v}\in \R^{n+1}$ such that
$\tilde{w}(\varepsilon) := \tilde{w} + \varepsilon \tilde{v}$ 
lies in the interior of a maximal dimensional Gr\"obner cone
of $I_{\tilde A}$.
We may also suppose $\tilde{v}_{n+1} =0$.
\end{lemma}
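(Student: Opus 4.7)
The plan is to combine two standard facts about the Gr\"obner fan $\mathcal{F}$ of the homogeneous toric ideal $I_{\tilde A}$: (a) $\mathcal{F}$ is a finite complete rational polyhedral fan in $\R^{n+1}$, so its maximal-dimensional cones have nonempty interior and the union of these interiors is open and dense in $\R^{n+1}$; and (b) the lineality space of $\mathcal{F}$ contains the row span of $\tilde A$, because adding any row of $\tilde A$ to a weight vector leaves the initial ideal of $I_{\tilde A}$ unchanged (it only rescales monomials within each $\tilde A$-graded piece).

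First I would use (a) to choose any $\tilde v^{(0)}\in \R^{n+1}$ for which $\tilde w+\varepsilon \tilde v^{(0)}$ lies in the interior of some maximal Gr\"obner cone $\sigma$. Since the set of such $\tilde v^{(0)}$ is open and dense in $\R^{n+1}$, this is possible for the given $\varepsilon>0$.

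Next, to enforce the additional constraint $\tilde v_{n+1}=0$, I would correct $\tilde v^{(0)}$ by a lineality vector. The first row of $\tilde A$ is $\ell:=(1,\ldots,1)\in \R^{n+1}$; by (b) it lies in the lineality space of $\mathcal{F}$, and $\ell_{n+1}=1\ne 0$. Setting
\[
\tilde v := \tilde v^{(0)} - \tilde v^{(0)}_{n+1}\,\ell,
\]
one has $\tilde v_{n+1}=0$, and
\[
\tilde w + \varepsilon \tilde v \;=\; (\tilde w + \varepsilon \tilde v^{(0)}) \;-\; \varepsilon \tilde v^{(0)}_{n+1}\,\ell
\]
differs from an interior point of $\sigma$ by a lineality vector, hence still lies in the interior of $\sigma$. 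This $\tilde v$ witnesses the lemma.

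The only ingredient that is not purely combinatorial about polyhedral fans is (b); verifying that two weight vectors in the same coset modulo the row span of $\tilde A$ produce the same initial ideal of $I_{\tilde A}$ is where I expect the main effort to go. It is, however, a standard property of the multigraded toric ideal (cf.\ \cite{S}), so the remainder of the argument is just a density-plus-lineality-translation in $\R^{n+1}$.
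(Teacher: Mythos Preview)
Your argument is correct and essentially identical to the paper's: both pick an arbitrary perturbation into the interior of a maximal Gr\"obner cone using the fan structure, and then subtract a multiple of $(1,\ldots,1)$---a lineality direction because $I_{\tilde A}$ is homogeneous---to kill the last coordinate of $\tilde v$. The only cosmetic difference is that you phrase (b) as ``the row span of $\tilde A$ lies in the lineality space,'' whereas the paper invokes only the standard homogeneity of $I_{\tilde A}$ to get the single vector $(1,\ldots,1)$; since that is the only lineality vector either proof actually uses, the two arguments coincide.
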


\begin{proof}
Let us prove the lemma.
The first part is a consequence of an elementary property of
the fan. 
When $I$ is a homogeneous ideal in the ring of polynomials of $n+1$ variables, 
we have
\begin{equation}\label{eq:claim1}
\init_{\tilde{u}}(I) = \init_{\tilde{u}+t(1,\cdots,1)}(I)
\end{equation}
for any $t$ and any weight vector $\tilde{u}$.
In other words, 
$\tilde u$ and $\tilde u + t (1, \ldots, 1)$ lie
in the interior of the same Gr\"obner cone.

When the weight vector 
$\tilde w(\varepsilon)=\tilde w + \varepsilon \tilde v$ lies
in the interior of the Gr\"obner cone,
we define a new $\tilde v $ by
$\tilde v - \tilde{v}_{n+1} (1, \ldots, 1)$.
Since the initial ideal does not change with this change of weight,
we may assume that $\tilde{v}_{n+1} = 0$ for the new $\tilde v$.
\end{proof}

Since the Gr\"obner fan is a refinement of the secondary fan
and hence
$\tilde{w}(\varepsilon)$ is an interior point
of a maximal dimensional secondary cone,
it induces a regular triangulation
(\cite{S} p.71, Proposition 8.15).
We denote by $\Delta$ 
the regular triangulation on $\tilde{A}$
induced by $\tilde{w}(\varepsilon)$.
For a $d$-simplex $\tau \in \Delta$,
we define $b^{(i)}$ as in the proof of the homogeneous case.
Since the weight for $\tilde{a}_{n+1}$ is the lowest,
${n+1}\in \tau$ holds.
We can change indices of 
$\tilde{a}_1, \ldots, \tilde{a}_n$ so that
$\tau = \{n-d+1, \ldots, {n+1}\}$ without loss of generality.

Let us prove that the dehomogenized series
$\phi_\tau(\lambda;x)$ converge.
It follows from a characterization of the support of the series
\cite[Theorem 3.4.2]{SST}  that we have
\[
\tilde{w}(\varepsilon) \cdot
\left(\sum_{i=1}^{n-d} k_i' b^{(i)} +\lambda \right)
\ge \tilde{w}(\varepsilon) \cdot \lambda,
\qquad \forall k'\in L'\cap S.
\]
Here, $S$ is a set such that $\Z_{\le 0}$ does not appear in the denominator
of the $\Gamma$ factors.
Take the limit $\varepsilon \to 0$ and we have 
\[
\tilde{w}\cdot \sum_{i=1}^{n-d} k_i' b^{(i)} \ge 0,
\qquad \forall k'\in L'\cap S.
\]
From Lemma \ref{lemma:interior}, 
$\tilde{w}(\varepsilon) \in C(\tilde{A},\tau)$ holds
and then
\[
\tilde{w}(\varepsilon)\cdot b^{(i)} \ge 0.
\]
Similarly, by taking the limit $\varepsilon \to 0$,
we have
\[
\tilde{w}\cdot b^{(i)} = \sum_{j=1}^n b_{ij}\ge 0.
\]
Therefore, we have $\sum_{j=1}^{n+1} b_{ij} = 0$,
the inequality $b_{i,{n+1}}\le 0 $ 
holds for all $i$.

Since
$k'_1 \ge -\lambda_1, \ldots, k_{n-d}'\ge -\lambda_{n-d}$,
we have
\[
\sum_{i=1}^{n-d} k_i' b_{i,{n+1}} \le - \sum_{i=1}^{n-d} \lambda_i b_{i,{n+1}}
\]
Note that the right hand side is a non-negative number.
Suppose that $\lambda_{n+1}$ is negative.
In terms of the Pochhammer symbol we have
$\Gamma(\lambda_{n+1} - m) 
= \Gamma(\lambda_{n+1})(-\lambda_{n+1}+1;m)^{-1}(-1)^m
$, 
then we can estimate the $(n+1)$-th gamma factors as
\begin{eqnarray}
\left|\Gamma(\lambda_{n+1} + \sum_{i=1}^{n-d} k_i'b_{i,{n+1}} +1)\right|
&=& |\Gamma(\lambda_{n+1}+1)| \cdot
\left|\left(-\lambda_{n+1} ; -\sum_{i=1}^{n-d} k_i'b_{i,{n+1}}\right)\right|^{-1}
\nonumber \\
&\le&
c' |\Gamma(\lambda_{n+1}+1)| \cdot
\left|\left(-\lambda_{n+1}; -\sum_{i=1}^{n-d} \lambda_i b_{i,{n+1}}
      \right)\right|^{-1}
\nonumber \\
&=& c \label{eq:eval}
\end{eqnarray}
Here, $c'$ and $c$ are suitable constants.

When $\lambda_{n+1} \geq 0$, there exists only finite set of values  
such that $\lambda_{n+1}+\sum_{i=1}^{n-d} k_i' b_{i,n+1} \geq 0$.
Then, we can show the inequality $(\ref{eq:eval})$ in an analogous way.

Now, by $(\ref{eq:eval})$, we have
\[
\left|
\frac{1}{\prod_{j=1}^n\Gamma(\lambda_j+\sum_{i=1}^{n-d} k_i'b_{ij}+1)}
\right|
\le c
\left|
\frac{1}{\Gamma(\lambda+\sum_{i=1}^{n-d} k_i'b^{(i)}+1)}
\right|
\]
We note that the right hand side is the coefficient
of the series solution for the homogeneous system for
$\tilde A$ and the series converge for
$(-\log|x_1|, \ldots, -\log |x_{n+1}|) \in C(\tilde{A},\tau,r)$
($r<1$)
uniformly with respect to $\tilde s$ in an open set.

Put $x_{n+1}=1$.
Since $-\log |x_{n+1}|=0$
and
$\tilde{w}(\varepsilon) \in \{y \mid y_{n+1} = 0\}$,
we can see that
\[
\bigcap_{\tau\in\Delta} C(\tilde{A},\tau,r) \cap \{y \mid y_{n+1} = 0\}
\]
is a non-empty open set of  $\R^n$.
Therefore the dehomogenized series $\phi_\tau(\lambda;x)$ converge in an open set
in the $(s,x)$ space.

\begin{theorem}  \label{th:converge}
The dehomogenized series $\phi_\tau(\lambda; x)$ satisfies the hypergeometric
differential-difference system $\HH_A$ and they are linearly independent
convergent solutions of $\HH_A$ when $\lambda$ runs over admissible
exponents associated to the initial system induced 
by the weight vector $\tilde{w}(\varepsilon)$.
\end{theorem}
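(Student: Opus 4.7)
The plan is to verify the three claims packaged in the theorem: that each $\phi_\tau(\lambda;x)$ (i) satisfies every operator in $\HH_A$, (ii) converges on a common nonempty open subset of the $(s,x)$-space, and (iii) is linearly independent as $\tau$ ranges over maximal simplices of $\Delta$ and $\lambda$ over admissible exponents. Item (ii) was already handled in the discussion preceding the theorem: the estimate~(\ref{eq:eval}) bounds the $(n{+}1)$-th Gamma factor uniformly in the summation index, so the dehomogenized coefficients are dominated by those of the homogenized series $\tilde\phi_\tau(\tilde\lambda;\tilde x)$, which converges by Lemma~\ref{prop:2} applied to $\tilde A$, on the open set $\bigcap_{\tau\in\Delta}C(\tilde A,\tau,r)\cap\{y_{n+1}=0\}$.

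For (i), I would compute the action of the two families of operators in $\HH_A$ directly on the series. The identity $\Gamma(z+1)=z\Gamma(z)$ converts $\pd{j}\phi_\tau(\lambda;x)$ into $\phi_\tau(\lambda-e_j;x)$, while $\prod_{i=1}^d S_i^{-a_{ij}}$ shifts $s\mapsto s-a_j$, which via $A\lambda=s$ (which holds along our parameterisation since $\tilde A\tilde\lambda=\tilde s$) translates into $\lambda\mapsto\lambda-e_j$ and yields the same series; hence the toric operator $\pd{j}-\prod_i S_i^{-a_{ij}}$ annihilates $\phi_\tau$. For the Euler operators $\sum_j a_{ij}x_j\pd{j}-s_i$, term-by-term differentiation produces the factor $(A\lambda)_i+\sum_k k'_k(AB)_{i,k}$, and since the columns of $B$ lie in $\Ker A$ the second summand vanishes and the first equals $s_i$.

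For (iii), the admissible exponents provided by the initial system of $\HH_{\tilde A}$ with respect to $\tilde w(\varepsilon)$ are counted, for each maximal simplex $\tau\in\Delta$ (necessarily containing $n{+}1$), by its normalized volume, and summed over $\Delta$ give $\vol(\tilde A)=\vol(A)$ exponents, the second equality following from the pyramid structure of $\tilde A$'s columns (all lying on $x_1=1$ and containing $(1,0,\dots,0)$). Distinct admissible $\lambda$ yield series with distinct leading exponents modulo $L$, and the dehomogenization step (setting $x_{n+1}=1$ and forgetting the $(n{+}1)$-th Gamma factor) preserves these distinctions on the projected supports, so linear independence carries over from the classical GKZ fundamental set of series solutions for $\tilde A$.

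The main obstacle is the difference-shift half of (i): one must make precise how the operators $S_i$, which act on $s$, are to act on a series whose exponents depend on $s$ through $A\lambda=s$, and verify that this interpretation is consistent with the direct $\pd{j}$-computation. This is the only place where $\HH_A$ genuinely departs from the classical $A$-hypergeometric differential system $H_A(\beta)$, and so this verification is what actually justifies the differential-difference extension of the GKZ framework.
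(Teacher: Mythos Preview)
Your proposal is correct in outline and matches the paper on (i) and (ii): the paper, like you, disposes of (i) by direct computation (in one sentence: ``Since $A\lambda=s$, it is easy to show that they are formal solutions''), and refers back to the preceding estimate~(\ref{eq:eval}) for (ii). One small slip: the initial system in question is that of the differential system $H_{\tilde A}(\tilde\beta)$, not of $\HH_{\tilde A}$.

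For (iii) you take a genuinely different route. The paper does not argue via the triangulation and the pyramid identity $\vol(\tilde A)=\vol(A)$; instead it works algebraically with the initial monomial ideal. The key step in the paper is that $\init_{(1,\ldots,1,0)}I_{\tilde A}$ contains no $\pd{n+1}$ (citing \cite[p.~119]{SST}), whence $\langle\init_{\tilde w(\varepsilon)}I_{\tilde A}\rangle=\langle\init_{w(\varepsilon)}I_A\rangle$ in $\C[\pd{1},\ldots,\pd{n+1}]$. Standard pair theory from \cite{SST} then gives $\vol(\tilde A)$ linearly independent monomials $\tilde x^{\tilde\lambda}$ solving the initial system~(\ref{eqn:sys:5.2}); since every standard pair $(\partial^a,T)$ has $n{+}1\in T$, dropping the last coordinate and the extra Euler equation yields an explicit bijection $x^\lambda\leftrightarrow\tilde x^{\tilde\lambda}$ with the solutions of~(\ref{eqn:sys:5.1}), and linear independence of the $x^\lambda$ follows. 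Your combinatorial argument is more elementary, but the claim ``dehomogenization preserves these distinctions'' needs one line you omit: if $\lambda-\lambda'\in L=\Ker_\Z A$, the first row of $\tilde A$ forces $\tilde\lambda_{n+1}-\tilde\lambda'_{n+1}=-\sum_{j\le n}(\lambda_j-\lambda'_j)\in\Z$, so $\tilde\lambda-\tilde\lambda'\in\tilde L$, contradicting distinctness upstairs. The paper's route has the advantage of linking the dehomogenized exponents directly to the initial system of $H_A(\beta)$ itself (not only that of $H_{\tilde A}$), which is what the rank argument in Theorem~\ref{th:main} later needs; your route has the advantage of avoiding the standard-pair machinery entirely.

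Finally, you flag the $S_i$-action in (i) as the ``main obstacle,'' but in both your sketch and the paper this is a one-line check; the substantive work is all in (iii).
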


\begin{proof}
Since $A \lambda = s$, it is easy to show that they are formal solutions of
the differential-difference system $\HH_A$.
We will prove that we can construct $m$  linearly independent solutions.
We note that the weight vector
$\tilde{w}(\varepsilon) = (1, \ldots, 1, 0) + \varepsilon v \in
\R^{n+1}$ is in the neighborhood of $(1,\ldots,1,0)\in \R^{n+1}$ 
and in the interior of a maximal dimensional Gr\"obner cone of 
$I_{\tilde{A}}$.

It follows from \cite[p.119]{SST}  that
the minimal generating set of  
$\init_{(1,\ldots,1,0)}I_{\tilde{A}}$ does not contain  $\pd{n+1}$.
Since
\[
\init_{\tilde{w}(\varepsilon)} I_{\tilde{A}} = \init_v(\init_{(1,\ldots,1,0)}I_{\tilde{A}})
\]
does not contain $\pd{n+1}$, we have
\[
M
=
\langle \init_{\tilde{w}(\varepsilon)} I_{\tilde{A}} \rangle
=
\langle \init_{w(\varepsilon)} I_{A} \rangle
\qquad \mbox{in $\C[\pd{1},\ldots,\pd{n+1}]$}.
\]
Here, we define  $w(\varepsilon)$
with $\tilde{w}(\varepsilon) = (w(\varepsilon),0)$.
Put
$\tilde{\theta} = (\theta_1, \ldots, \theta_{n+1})$.
From \cite[Theorem 3.1.3]{SST}, for 
generic $\tilde{\beta} = (\beta_0, \beta),\ \  \beta \in \C^d$,
the initial ideal 
$\init_{(-\tilde{w}(\varepsilon),\tilde{w}(\varepsilon))} H_{\tilde{A}}(\tilde{\beta})$
is generated by
$\init_{\tilde{w}(\varepsilon)} (I_{\tilde{A}})$
and
$\tilde{A}\tilde{\theta}- \tilde{\beta}$.
Let us denote by  $T(M)$ the standard pairs of $M$.
From \cite[Theorem 3.2.10]{SST},
the initial ideal
\begin{equation}\label{eqn:sys:5.2}
\langle\init_{\tilde{w}(\varepsilon)}I_{\tilde{A}},
\tilde{A}\tilde{\theta}- \tilde{\beta}\rangle
\end{equation}
has  $\#T(M)=\vol(\tilde{A})$ linearly independent solutions of the form
\[
\{ \tilde{x}^{\tilde{\lambda}} \mid (\partial^a, T) \in T(M) \}
\]
Here, $\tilde{\lambda}$ is defined by
$
\tilde{\lambda}_i = a_i \in \Z_{\ge 0}, \ \forall i\not\in T
$
and 
$\tilde{A}\tilde{\lambda}=\tilde{\beta}$.
Note that $\tilde{\lambda}$ is admissible for the $d$-simplex $T$.

Since we have
\[
\langle \init_{\tilde{w}(\varepsilon)} I_{\tilde{A}} \rangle
=
\langle \init_{w(\varepsilon)} I_{A} \rangle
\]
the difference between
\begin{equation}\label{eqn:sys:5.1}
\langle\init_{{w}(\varepsilon)}I_{{A}}, {A}{\theta}- {\beta}\rangle
\end{equation}
and $(\ref{eqn:sys:5.2})$ is only
\[
\theta_1 + \cdots + \theta_n + \theta_{n+1} - \beta_0
\]
and other equations do not contain 
$x_{n+1}, \pd{n+1}$.

For any $(\partial^a, T)\in T(M)$, we have $n+1\in T$.
Therefore, the two solution spaces
$(\ref{eqn:sys:5.1})$ and $(\ref{eqn:sys:5.2})$ 
are isomorphic under the correspondence
\begin{equation}\label{eq:isom:5.1}
x^\lambda \mapsto \tilde{x}^{\tilde{\lambda}}
\end{equation}
Here, we put $\tilde{\lambda} = (\lambda, \lambda_{n+1})$
and
$\lambda_{n+1}$ is defined by
\[
\sum_{i=1}^n\lambda_i + \lambda_{n+1} - \beta_0 = 0
\]
It follows from \cite[Theorem 2.3.11 and Theorem 3.2.10]{SST} that
\[
\{
\tilde{x}^{\tilde{\lambda}} \mid (\partial^a, T) \in T(M)\}
\]
are $\C$-linearly independent.
Therefore, 
from the correspondence $(\ref{eq:isom:5.1})$, 
the functions
\[
\{ x^\lambda \mid (\partial^a, T) \in T(M)\},
\]
of which cardinality is $\vol(A)$, 
are $\C$-linearly independent.
Hence, series solutions with the initial terms
\[
\left\{ \frac{x^\lambda}{\Gamma(\lambda+1)} \mid (\partial^a, T) \in T(M)
\right\}
\]
are $\C$ linearly independent,
which implies the linear independence of series solutions
with these starting terms \cite{SST}. 
We have completed the proof of the theorem
and also that of Theorem \ref{thm:solutions}.
\end{proof}

\begin{theorem}\label{th:main}
The holonomic rank of $\HH_A$ is equal to the normalized volume of $A$.
\end{theorem}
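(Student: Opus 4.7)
The plan is to prove the two matching inequalities $\rank(\HH_A) \geq \vol(A)$ and $\rank(\HH_A) \leq \vol(A)$. The lower bound is the easier one: Theorem~\ref{th:converge} furnishes $\vol(A)$ linearly independent convergent series solutions of $\HH_A$ in an open subset of the $(s,x)$-space, and a Cauchy--Kovalevsky-type argument for differential-difference systems implies that the dimension of the solution space at a generic base point bounds $\dim_{\C(s,x)}\U/(\U\HH_A)$ from below by $\vol(A)$.

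For the upper bound I would use Saito's contiguity relations (\cite{Saito1}, \cite{SST-compositio}, \cite[Chapter 4]{SST}) to eliminate the shift operators. For generic $\beta$, Saito constructs differential operators $B_i(\beta,x,\pd{})\in\C(\beta,x)\langle\pd{}\rangle$ implementing the parameter shift $\beta\mapsto\beta+e_i$ on solutions of $H_A(\beta)$; treating $\beta$ as the symbolic variable $s$, this yields $B_i\in\C(s,x)\langle\pd{}\rangle$ for which I would establish $S_i-B_i\in\U\HH_A$. Granted this, every $S_i^{\pm 1}$ rewrites as a differential operator modulo $\U\HH_A$, producing a $\C(s,x)$-linear surjection
\[
\C(s,x)\langle\pd{1},\ldots,\pd{n}\rangle\big/H_A(s) \twoheadrightarrow \U/(\U\HH_A),
\]
where $H_A(s)$ denotes the $\A$-hypergeometric ideal with parameter $s$. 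The domain has dimension $\vol(A)$ over $\C(s,x)$ by the generic-parameter GKZ rank theorem (directly when $A$ is homogeneous, and via the homogenization $\tilde A$ used earlier when $A$ is not), which, combined with the lower bound, yields the desired equality.

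The main obstacle is upgrading Saito's intertwining on the solution side to the ideal-level containment $S_i-B_i\in\U\HH_A$. My plan is to adjoin $S_i-B_i$ to $\HH_A$ and argue: (a) each solution $\phi_\tau(\lambda;x)$ of $\HH_A$ still satisfies the augmented system by Saito's intertwining, so its solution space retains dimension $\vol(A)$; (b) the augmented ideal factors through $\C(s,x)\langle\pd{}\rangle/H_A(s)$, so its rank is at most $\vol(A)$; (c) combining (a), (b), and the lower bound forces the augmented and original systems to have identical rank and identical solution spaces. A holonomic-ideal uniqueness argument in the differential-difference ring $\U$ would then conclude that the augmentation introduces no new element, i.e.\ $S_i-B_i$ already lay in $\U\HH_A$. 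Making this last step precise in the non-commutative setting of $\U$ -- in particular ensuring that rank equals the length of the module and that matching solution spaces forces ideal equality -- appears to be the most delicate portion of the proof.
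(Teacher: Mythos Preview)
Your overall plan---split into two inequalities, use the convergent series solutions from Theorem~\ref{th:converge} for one direction and Saito's contiguity operators for the other---is the same as the paper's. But there is a genuine gap in your route to the upper bound, and it is precisely the step you flag as ``delicate.''

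You try to prove $S_i-B_i\in\U\HH_A$ by adjoining $S_i-B_i$ to $\HH_A$, matching solution spaces, and then invoking a ``holonomic-ideal uniqueness'' principle. This does not work as written: from $\HH_A\subset\HH_A'$ you only get a surjection $\U/\U\HH_A\twoheadrightarrow\U/\U\HH_A'$, hence $\rank(\HH_A)\ge\rank(\HH_A')$. Your arguments (a)--(c) then yield $\rank(\HH_A')=\vol(A)$, which merely reproves the \emph{lower} bound for $\HH_A$, not the upper one. Equal ranks (or even equal classical solution spaces) do not force equality of left ideals in a non-commutative ring like $\U$, so the final ``uniqueness'' step has no justification.

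The paper avoids this entirely because the ideal membership is in fact direct. One works not with coordinate shifts $S_i$ but with the column shifts $S^{\pm a_j}$. The element $\pd{j}-S^{-a_j}$ is already a \emph{generator} of $\HH_A$, so $S^{-a_j}\equiv\pd{j}$ modulo $\U\HH_A$ trivially. For the opposite direction, Saito's contiguity operator $r_j\in\C(s,x)\langle\pd{}\rangle$ satisfies an identity of the form $r_j\pd{j}\equiv b_j(s)$ modulo $H_A(s)\subset\U\HH_A$ with $b_j\in\C[s]$ generically nonzero; combining this with $\pd{j}\equiv S^{-a_j}$ and inverting $b_j(s)$ in $\C(s,x)$ gives $S^{a_j}\equiv b_j(s)^{-1}r_j$ modulo $\U\HH_A$. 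Since the columns $a_j$ span $\Z^d$, every $S_i^{\pm1}$ is then congruent to a differential operator, and a block order $S>S^{-1}>\pd{}$ shows the standard monomials of $\U\HH_A$ are among those of $H_A(s)$ in $\C(s,x)\langle\pd{}\rangle$, which number $\vol(A)$ by Adolphson's theorem (no homogenization detour is needed here).

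For the lower bound, your appeal to a ``Cauchy--Kovalevsky-type argument'' is too vague in the differential-difference setting. The paper argues by contradiction: if $\rank(\HH_A)<\vol(A)$, the standard monomials $T$ (for the block order above) are a proper subset of the standard monomials $T'$ of $H_A(s)$, giving a relation $\partial^r\equiv\sum_{\alpha\in T}c_\alpha(s,x)\partial^\alpha$ modulo $\U\HH_A$ for some $r\in T'\setminus T$. Applying this relation to the $\vol(A)$ convergent solutions $f_1,\ldots,f_m$ makes a row of the Wronskian matrix $(\partial^\delta f_i)_{\delta\in T'}$ dependent on the others, forcing the Wronskian to vanish identically---contradicting the linear independence of the $f_i$ for generic $s$.
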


\begin{proof}
First we will prove $\rank(\HH_A) \le \vol(A)$.
It follows from the Adolphson's theorem~(\cite{Adolphson}) that
the holonomic rank of $\A$-hypergeometric system
$H_A(\beta)$ is equal to the normalized volume of $A$ for generic
parameters $\beta$.
It implies that the standard monomials for a Gr\"obner basis of
the $\A$-hypergeometric system $H_A(s)$
in $\C(s,x)\langle \pd{1}, \ldots, \pd{n} \rangle$
consists of $\vol(A)$ elements.
We note that elements in the Gr\"obner basis can be regarded as an element in
the ring of differential-difference operators with rational function coefficients
$\U$.
We denote by $\pd{j}$ and $r_j$ the creation and annihilation operators.
The existence of them are proved in 
\cite[Chapter 4]{SST-compositio}.
Then, we have 
\[
H_j =  \pd{j} - \prod_{i=1}^n S_{i}^{-a_{ij}} \in \HH_A
\]
and
\[
B_j = r_j     - \prod_{i=1}^n S_{i}^{a_{ij}} \in \HH_A, \quad
r_j \in \C(s,x)\langle \pd{1}, \ldots, \pd{n} \rangle.
\]
Since the column vectors of $A$ generate the lattice $\Z^d$,
we obtain from $B_j$'s and $H_j$'s elements of the form
$S_{i} - p(s,x,\pd{}), \ S_{i}^{-1} - q(s,x,\pd{}) \in \HH_A$.
It implies the number of standard monomials
of a Gr\"obner basis of $\HH_A$
with respect to a block order such that
$S_1, \ldots, S_n > S_1^{-1}, \ldots, S_n^{-1} > \pd{1}, \ldots, \pd{n}$ 
is less than or equal to
$\vol(A)$.


Second, we will prove $\rank(\HH_A) \ge \vol(A)$.
We suppose that $\rank(\HH_A) < \vol(A)$ and will induce a contradiction.
For the block order $S_1, \cdots, S_d > S_1^{-1}, \cdots, S_d^{-1} > \pd{1},
\cdots, \pd{n}$, we can show that the standard monomials $T$ of a Gr\"obner
basis of $\HH_A$ in $\U$ contains only differential terms and
$\# T < \vol (A)$ by the assumption.
Let $T'$ be the standard monomials of Gr\"obner basis $G(s)$ of $H_A(s)$
in the ring of differential operators with rational function coefficients $D(s)$.
Note that $\# T' = {\rm vol}(A)$.
Then $T$ is a proper subset of the set $T'$.
For $r \in T'\setminus T$, it follows that
\[
\partial^r \equiv \sum_{\alpha\in T} c_\alpha(x,s) \partial^\alpha \qquad
\mod \HH_A.
\]
From Theorem~\ref{th:converge}, we have
convergent series solutions $f_1(s,x), \cdots, f_m(s,x)$ of $\HH_A$, where
$m=\vol(A)$.
So,
\begin{equation}\label{eq:lindep}
\partial^r \bullet f_i = \sum_{\alpha\in T} c_\alpha(x,s)
\partial^\alpha \bullet f_i
\end{equation}
Since $f_1(s,x), \ldots, f_m(s,x)$ are linearly independent, the
Wronskian standing for $T'$
\[
W(T';f)(x,s) =
\left|
\begin{matrix}
f_1(s;x)             & \cdots & f_m(\beta;x) \cr
\pd{}^\delta f_1(s;x)& \cdots & \pd{}^\delta f_m(\beta;x) \cr
\vdots               & \cdots & \vdots \cr
\end{matrix}
\right|
\qquad (\pd{}^\delta\in T' \setminus \{1\})
\]
is non-zero for generic number $s$.
However $r \in T'$ and $(\ref{eq:lindep})$ induce the Wronskian
$W(T';f)(s,x)$ is equal to zero.

Finally, by $\rank(\HH_A) \le \vol(A)$ and $\rank(\HH_A) \ge \vol(A)$,
the theorem is proved.
\end{proof}

\begin{example} \label{ex:series} \rm
Put
$\displaystyle
A =
\begin{pmatrix}
1 & 2 & 3
\end{pmatrix}
$ and
$\displaystyle
\tilde{A} =
\begin{pmatrix}
1 & 1 & 1 & 1 \\
1 & 2 & 3 & 0
\end{pmatrix}
$.
This is {\it Airy type integral} \cite[p.223]{SST}.

The matrix $\tilde{A}$ is homogeneous.
For ${\tilde w}(\varepsilon) = (1,1,1,0)+\frac{1}{100}(1,0,0,0)$, 
the initial ideal 
${\rm in}_{{\tilde w}(\varepsilon)}(I_{\tilde A})$ is generated
by $\partial_1^2, \partial_1 \partial_2, \partial_1 \partial_3, \partial_2^3$.
Note that the initial ideal does not contain $\partial_4$.
We solve the initial system
$\left( \tilde{A} \tilde{\theta} - \tilde{s} \right) \bullet g = 0, 
\left( {\rm in}_{{\tilde w}(\varepsilon)}(I_{\tilde A}) \right) \bullet g = 0$.
The standard pairs $(\partial^a, T)$ for 
${\rm in}_{{\tilde w}(\varepsilon)}(I_{\tilde A})$
are 
$(\partial_1^0 \partial_2^1, \{ 3, 4 \})$,
$(\partial_1^0 \partial_2^0, \{ 3, 4 \})$,
$(\partial_1^0 \partial_2^2, \{ 3, 4 \})$.
Hence, the solutions for the initial system are \\
$x_1^0 x_2^1 x_3^{(s_1-2)/3} x_4^{s_0-1-(s_1-2)/3}$,
$x_1^0 x_2^0 x_3^{s_1/3} x_4^{a_0-s_1/3}$,
$x_1^0 x_2^2 x_3^{(s_1-4)/3} x_4^{s_0-2-(s_1-4)/3}$
(\cite{SST}).
Therefore, the ${\cal A}$-hypergeometric differential-difference system
$\HH_{\tilde{A}}$ has the following series solutions.

\begin{eqnarray*}
\tilde{\phi}_1(\tilde{\lambda}, \tilde{x}) 
&=&
x_4^{s_0}
\left(\frac{x_2}{x_4}\right)
\left(\frac{x_3}{x_4}\right)^{\frac{s_1-2}{3}} \\
& & \ \cdot 
\sum_{\substack{
k_1 \ge 0,\ k_2\ge -1\\
(k_1,k_2) \in L'
}}
\frac{
\left(
{x_1}{x_3^{-1/3}x_4^{-2/3}}
\right)^{k_1}
\left(
{x_2}{x_3^{-2/3}x_4^{-1/3}}
\right)^{k_2}
}
{
k_1!(k_2+1)!
\Gamma(\frac{s_1-k_1-2k_2+1}{3})
\Gamma(\frac{3s_0-s_1-2k_1-k_2+2}{3})
}
\\
\tilde{\phi}_2(\tilde{\lambda}, \tilde{x}) 
&=&
x_4^{s_0}
\left(\frac{x_3}{x_4}\right)^{\frac{s_1}{3}} \\
& & \ \cdot
\sum_{\substack{
k_1 \ge 0,\ k_2\ge 0\\
(k_1,k_2) \in L'
}}
\frac{
\left(
{x_1}{x_3^{-1/3}x_4^{-2/3}}
\right)^{k_1}
\left(
{x_2}{x_3^{-2/3}x_4^{-1/3}}
\right)^{k_2}
}
{
k_1!k_2!
\Gamma(\frac{s_1-k_1-2k_2+3}{3})
\Gamma(\frac{3s_0-s_1-2k_1-k_2+3}{3})
}
\\
\tilde{\phi}_3(\tilde{\lambda}, \tilde{x}) 
&=&
x_4^{s_0}
\left(\frac{x_2}{x_4}\right)^{2}
\left(\frac{x_3}{x_4}\right)^{\frac{s_1-4}{3}} \\
& & \cdot
\sum_{\substack{
k_1 \ge 0,\ k_2\ge -2\\
(k_1,k_2) \in L'
}}
\frac{
\left(
{x_1}{x_3^{-1/3}x_4^{-2/3}}
\right)^{k_1}
\left(
{x_2}{x_3^{-2/3}x_4^{-1/3}}
\right)^{k_2}
}
{
k_1!(k_2+2)!
\Gamma(\frac{s_1-k_1-2k_2-1}{3})
\Gamma(\frac{3s_0-s_1-2k_1-k_2+1}{3})
}
\end{eqnarray*}
Here, 
\[L' = \{ (k_1, k_2) \,|\, k_1 \equiv 0 \ {\rm mod}\, 3, 
                                k_2 \equiv 0 \ {\rm mod}\, 3 \} \cup 
       \{ (k_1, k_2) \,|\, k_1 \equiv 1 \ {\rm mod}\, 3, 
                                k_2 \equiv 1 \ {\rm mod}\, 3 \}.
\]
The matrix $A$ is not homogeneous and by dehomogenizing 
the series solution for $\tilde{A}$
we obtain the following series solutions
for the ${\cal A}$-hypergeometric differential-difference
system $\HH_A$.
\begin{eqnarray*}
{\phi}_1({\lambda}, {x}) 
&=&
x_2
x_3^{\frac{s_1-2}{3}}
\sum_{\substack{
k_1 \ge 0,\ k_2\ge -1\\
(k_1,k_2) \in L'
}}
\frac{
\left(
{x_1}{x_3^{-1/3}}
\right)^{k_1}
\left(
{x_2}{x_3^{-2/3}}
\right)^{k_2}
}
{
k_1!(k_2+1)!
\Gamma(\frac{s_1-k_1-2k_2+1}{3})
}
\\
{\phi}_2({\lambda}, {x}) 
&=&
x_3^{\frac{s_1}{3}}
\sum_{\substack{
k_1 \ge 0,\ k_2\ge 0\\
(k_1,k_2) \in L'
}}
\frac{
\left(
{x_1}{x_3^{-1/3}}
\right)^{k_1}
\left(
{x_2}{x_3^{-2/3}}
\right)^{k_2}
}
{
k_1!k_2!
\Gamma(\frac{s_1-k_1-2k_2+3}{3})
}
\\
{\phi}_3({\lambda}, {x}) 
&=&
x_2^2
x_3^{\frac{s_1-4}{3}}
\sum_{\substack{
k_1 \ge 0,\ k_2\ge -2\\
(k_1,k_2) \in L'
}}
\frac{
\left(
{x_1}{x_3^{-1/3}}
\right)^{k_1}
\left(
{x_2}{x_3^{-2/3}}
\right)^{k_2}
}
{
k_1!(k_2+2)!
\Gamma(\frac{s_1-k_1-2k_2-1}{3})
}
\end{eqnarray*}
Here $\phi_k(x)$ is the dehomogenization of $\tilde{\phi}_k(x)$.

Finally, let us present a difference Pfaffian system
for $A$.
It can be derived by using Gr\"obner bases of $\HH_A$
and has the following form:
\[
S_1 
\left(
\begin{array}{ccc}
f \\ x_3 \partial_3 \bullet f \\ S_1 \bullet f
\end{array}
\right)
=
\left(
\begin{array}{ccc}
0 & 0 & 1 \\
- \frac{s_1 x_1}{6x_2} & \frac{3 x_1x_3- 4 x_2^2}{6 x_2x_3} & 
\frac{2(s_1- 1)x_2+ x_1^2}{6x_2} \\
\frac{s_1}{2x_2} & -\frac{3}{2x_2} & -\frac{x_1}{2x_2}
\end{array}
\right)
\left(
\begin{array}{ccc}
f \\ x_3 \partial_3 \bullet f \\ S_1 \bullet f
\end{array}
\right).
\]
\end{example}

\end{document}